\providecommand{\tabularnewline}{\\}
\numberwithin{equation}{section}
  \theoremstyle{definition}
  \newtheorem{defn}{\protect\definitionname}
  \theoremstyle{definition}
  \newtheorem{example}{\protect\examplename}
  \theoremstyle{plain}
  \newtheorem{prop}{\protect\propositionname}
   \newenvironment{proof}[1][\proofname]{\par
     \normalfont\topsep6\p@\@plus6\p@\relax
     \trivlist
     \itemindent\parindent
     \item[\hskip\labelsep
           \scshape
       #1]\ignorespaces
   }{%
     \endtrivlist\@endpefalse
   }
   \providecommand{\proofname}{Proof}
  \theoremstyle{plain}
  \newtheorem{cor}{\protect\corollaryname}
\providecommand{\corollaryname}{Corolario}
\providecommand{\definitionname}{Definición}
\providecommand{\examplename}{Ejemplo}
\providecommand{\propositionname}{Proposición}
\begin{document}

\title{Composición de relaciones y $\tau$-factorizaciones}
\maketitle
\begin{center}
David Méndez\footnote{Docente Universitario, Facultad de Ciencias, UNAH: david.mendez@unah.edu.hn}
\par\end{center}

\begin{center}
\textbf{Resumen}
\par\end{center}

La teoría de $\tau$-factorizaciones en dominios integrales fue desarrollada
por Anderson y Frazier, la misma caracterizó las factorizaciones conocidas
y abrió las puertas para crear otras. Se puede visualizar como una
restricción a la operación de multiplicación de la estructura; considerando
una relación simétrica $\tau$ sobre los elementos no invertibles
y distintos de cero de un dominio integral. 

Este trabajo tiene como objetivo principal estudiar e investigar el
concepto de $\tau$-factorizaciones cuando $\tau$ es la composición
de dos o más relaciones. Para poder trabajar con este concepto, se
verifica qué propiedades en específico se pueden obtener a partir
de las relaciones dadas. Entre estas propiedades se estudió las más
conocidas: reflexividad, simetría, transitividad, antisimetría; y
otras asociadas a la teoría de $\tau$-factorizaciones como las relaciones
divisivas, que preservan asociados y multiplicativas. 

\textit{Palabras clave: anillos conmutativos, teoría de factorización,
composición de relaciones}
\begin{center}
\textbf{Abstract}
\par\end{center}

The theory of $\tau$-factorizations on integral domains was developed
by Anderson and Frazier. This theory characterized all the known factorizations
and opened the opportunity to create new ones. It can be visualized
as a restriction to the structure's multiplicative operation, by considering
a symmetric relation $\tau$ on the set of non-zero non-unit elements
of an integral domain. 

The main goal of this work is to study the $\tau$-factorization concept,
when $\tau$ is a composition of two or more relations. To achieve
this, the specific properties one can obtain from the given relations
are verified and analyzed. Some of the studied properties which are
the most known include: reflexivity, symmetry, transitivity, antisymmetry.
And others related to the $\tau$-factorization theory, like: divisive,
associate-preserving and multiplicative relations.

\textit{Keywords: conmutative rings, factorization theory, composition
of relations}

\section*{Introducción}

\begin{doublespace}
La teoría de $\tau$-factorizaciones en dominios integrales fue desarrollada
por Anderson y Frazier en el 2006, un resumen de este trabajo se hace
en Anderson y Frazier (2011), la misma caracterizó las factorizaciones
conocidas y abrió las puertas para crear otras. De esta manera la
teoría generalizó las factorizaciones en dominios integrales conocidas
y estudiadas en años anteriores. Por ejemplo, de las factorizaciones
en elementos irreducibles surgieron los dominios atómicos y de las
factorizaciones en elementos primales surgieron los dominios de Schreier
(Ortiz, 2008).

Este estudio se puede lograr de dos formas. En la primera se consideran
dos relaciones $\tau_{1}$, $\tau_{2}$ y se analiza que resultados
se pueden obtener sobre la relación $\tau_{1}\circ\tau_{2}$. La segunda
forma se basa en tratar de factorizar una relación. Este documento
se enfocó más en la primera forma, detalla algunos elementos de su
complejidad, además de observar como se comportan sus factores, mediante
muchos ejemplos. Para poder trabajar con este concepto, se verifica
qué propiedades en específico se pueden obtener a partir de las relaciones
dadas. 
\end{doublespace}

\section*{Conceptos Básicos}

Dados $A,\,B\subseteq D^{\#}$, el producto cartesiano de $A$ y $B$
se denota y define por $A\times B=\left\{ \left(a,b\right)\,:\,a\in A,\,\,b\in B\right\} .$
Una relación binaria $R$ de $A$ a $B$, es un subconjunto de $A\times B$.
Al conjunto $A$ se le conoce como \textit{dominio} de $R$ y se denota
por $Dom(R)$, al conjunto $B$ se le conoce como \textit{codominio}
de $R$ y se denota como $Codom(R).$ La \textit{coimagen} de $R$,
se define como $Coim(R)=\left\{ a\in A\,:\,\left(\exists b\in B\right)\left((a,b)\in R\right)\right\} $
y la \textit{imagen} de $R$ se define como $Im(R)=\left\{ b\in B\,:\,\left(\exists a\in A\right)\left((a,b)\in R\right)\right\} .$
Estudiamos los tipos clásicos de relaciones: reflexivas, simétricas,
transitivas, de equivalencia y de orden; todas estas definidas en
la forma usual. La definición estándar de composición de relaciones
es la siguiente.
\begin{defn}
Sean $R_{1},\,R_{2}$ dos relaciones sobre $A$, se define la \textit{composición}
$R_{1}\circ R_{2}$ como la relación dada por $aR_{1}\circ R_{2}b$
si y solo si existe $c\in A$ tal que $aR_{2}c$ y $cR_{1}b$. A $R_{1}$
y $R_{2}$ se les conoce como factores de la relación $R_{1}\circ R_{2}$.
\end{defn}
Note que la composición de relaciones no es conmutativa, $Coim(R_{1}\circ R_{2})\subseteq Coim(R_{2})$
e $Im(R_{1}\circ R_{2})\subseteq Im(R_{1})$. Dada una relación $R$
en $A$, se define la relación \textit{inversa} $R^{-1}$ de $R$
dada por $aR^{-1}b$ si y solo si $bRa$. Observar que de las definiciones
de imagen y coimagen se obtiene que $Coim\left(R\right)=Im\left(R^{-1}\right)$
e $Im\left(R\right)=Coim\left(R^{-1}\right)$. Dado un conjunto $A$
y $S\subseteq A$, se define la diagonal o identidad en $S$ por $id_{S}=\{(a,a)\,:\,a\in S\}.$
Note que $id_{Im(R)}\subseteq R\circ R^{-1}$ y $id_{Coim(R)}\subseteq R^{-1}\circ R$.

Sea $D$ un dominio integral, $U(D)$ el conjunto de elementos invertibles
o unidades de $D$ y $D^{\#}$ el conjunto de elementos distintos
de cero que no son unidades de $D$. Un producto $a=\lambda a_{1}a_{2}\cdots a_{n}$
es llamado una $\tau$-factorización de $a\in D^{\#}$, si se cumple
que $a_{i}\tau a_{j}$ para todo $i\neq j$ y $\lambda\in U(D)$.
A los elementos $a_{i}$ se les llama $\tau$-factores de $a$ y $a$
es llamado un $\tau$-producto de los $a_{i}$. Note que si $\tau=D^{\#}\times D^{\#}$,
las $\tau$-factorizaciones y las factorizaciones usuales en $D$
coinciden. Otro ejemplo de relevancia es cuando $\tau=S\times S$,
donde $S\subset D^{\#}$ es un conjunto de elementos distinguidos
de $D^{\#}$.

Los algebristas se han interesado por estudiar estructuras menos exigentes
que la de dominio de factorización única (UFD, por sus siglas en inglés),
por ejemplo, un dominio $D$ se denomina \textit{atómico}, si todos
sus elementos se pueden expresar como producto finito de elementos
irreducibles. Otras estructuras que han resultado importantes son
las siguientes (ver Anderson y Frazier (2011) para más detalles):

(1) \textit{Dominio de factorización acotada} (BFD, por sus siglas
en inglés), \\
(2) \textit{Dominio con la condición de cadenas ascendentes de ideales
principales} (ACCP, por sus siglas en inglés),\\
(3) \textit{Dominio factorial a mitad} (HFD, por sus siglas en inglés),
\\
(4) \textit{Dominio con elementos con una cantidad finita de divisores
irreducibles} (``idf-domain'', por sus siglas en inglés),\\
(5) \textit{Dominio con finitas factorizaciones} (FFD, por sus siglas
en inglés).

\hspace{1cm}Las conexiones entre estos conceptos fué estudiada por
Anderson, Anderson y Zafrullah (1990) y se pueden resumir en la Figura
\ref{fig:Conexi=0000F3n-entre-tipos}. Los autores no solo demostraron
las implicaciones si no que los conversos no se cumplen.
\begin{figure}[h]
\begin{centering}
$\xymatrix{ & \text{HFD}\ar[dr]\\
\text{UFD}\ar[r]\ar[ur]\ar[dr] & \text{FFD}\ar[r]\ar[d] & \text{BFD}\ar[r] & \text{ACCP}\ar[r] & \text{atómico}\\
 & \text{idf-domain}
}
$ 
\par\end{centering}
\centering{}\caption[Conexión entre tipos de dominios. ]{\label{fig:Conexi=0000F3n-entre-tipos}Conexión entre tipos de dominios,
Anderson et. al. (1990) }
\end{figure}
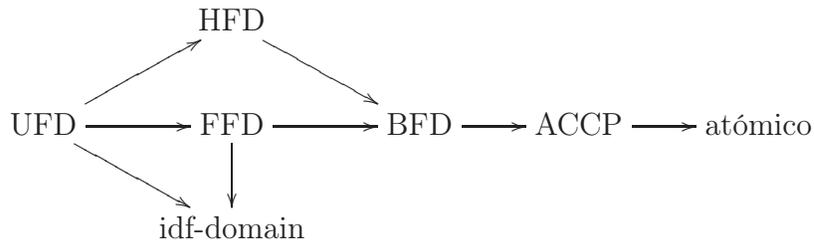

El trabajo de McAdams y Swan (2004) motivó a Anderson y Frazier (2011)
a definir el concepto de $\tau$-factorizaciones, área que llamaron
teoría de factorizaciones generalizadas. La definición de este concepto
fué la siguiente:
\begin{defn}
\label{def:tau-facts}(Anderson y Frazier (2011)) Sea $\tau$ una
relación simétrica sobre $D^{\#}$. Entonces se dice que $a=\lambda a_{1}a_{2}\cdots a_{n}$
es una $\tau$-\textit{factorización} para $a\in D^{\#}$, si $a_{i}\tau a_{j}$
para todo $i\neq j$ y $\lambda\in U(D)$.
\end{defn}
Los autores adaptaron los conceptos de primo, irreducible (o átomo),
UFD, HFD, FFD,...y definieron los respectivos conceptos de $\tau$-primo,
$\tau$-átomo, $\tau$-UFD, $\tau$-HFD, $\tau$-FFD,... además, estudiaron
las conexiones entre estos nuevos tipos de dominios y observaron la
necesidad de crear nuevos tipos de relaciones, que llamaron relaciones
divisivas, que preservan asociados y multiplicativas. En forma resumida,
una relación simétrica $\tau$ es divisiva si cuando $a\tau b$ y
$a'|a$, entonces $a'\tau b$, la relación preserva asociados si cuando
$a\tau b$ y $a'\sim a$, entonces $a'\tau b$ y la relación es multiplicativa
si cuando $a\tau b$ y $a\tau c$, entonces $a\tau bc$. Las conexiones
obtenidas entre estos tipos de dominios se resumen en la Figura \ref{fig:tau-estruc-1}.

\begin{figure}[h]
\begin{centering}
$\xymatrix{\text{UFD}\ar[r]\ar[dd]_{*} & \text{FFD}\ar[r]\ar[d]_{*} & \text{BFD}\ar[r]\ar[dd]_{*} & \text{ACCP}\ar[r]\ar[dd] & \text{atómico}\\
 & \text{\ensuremath{\tau}}\text{-FFD}\ar[dr]\\
\tau\text{-UFD}\ar[ur]\ar[dr] &  & \tau\text{-BFD}\ar[r]_{*} & \tau\text{-ACCP}\ar[r]_{*} & \tau\text{-atómico}\\
 & \tau\text{-HFD}\ar[ur]
}
$ 
\par\end{centering}
\caption[Propiedades de las $\tau$-estructuras]{\label{fig:tau-estruc-1}Propiedades de las $\tau$-estructuras ({*}
significa que $\tau$ es divisiva) (Anderson y Frazier, 2011)}
\end{figure}
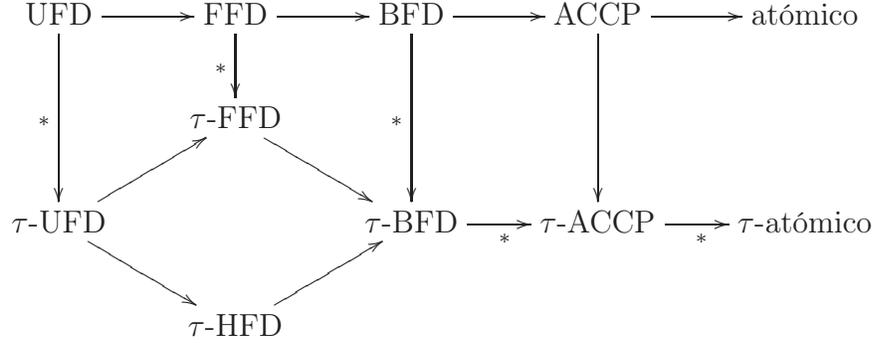

Para propósitos de este trabajo, se redefinieron los conceptos de
divisiva, preservar asociados y multiplicativa, con el objetivo de
que las definiciones sean compatibles con relaciones no necesariamente
simétricas.
\begin{defn}
\label{def:div_laterales} Sean $a,\,a',\,b,\,b',c\in D^{\#}$ y $\tau$
una relación (no necesariamente simétrica) sobre $D^{\#}$.\\
(1) Se dice que $\tau$ es \textit{divisiva por la izquierda} (\textit{derecha}),
si $a\tau b$ y $a'|a$ (resp. $b'|b$), entonces $a'\tau b$ (resp.
$a\tau b'$). Si $\tau$ es divisiva por la izquierda y por la derecha,
 entonces se dice que $\tau$ es \textit{divisiva}. \\
(2) Se dice que $\tau$ \textit{preserva asociados por la izquierda}
(\textit{derecha}), si $a\sim c$ (resp. $b\sim c$) y $a\tau b$,
entonces $c\tau b$ (resp. $a\tau c$). Si $\tau$ preserva asociados
por la izquierda y por la derecha, se dice que $\tau$ \textit{preserva
asociados}. \\
(3) Se dice que $\tau$ es \textit{multiplicativa por la izquierda}
(\textit{derecha}), si $a\tau c$ y $b\tau c$ (resp. $a\tau b$ y
$a\tau c$), entonces $ab\tau c$ (resp. $a\tau bc$). Se dice que
$\tau$ es \textit{multiplicativa}, si es multiplicativa por la izquierda
y por la derecha. 
\end{defn}
Observe que la principal diferencia de estas definiciones con las
originales es que ahora no se requiere que la relación $\tau$ sea
simétrica. A manera de ilustración, considere el siguiente ejemplo.
\begin{example}
Dado un dominio integral $D$, considere la relación $\tau_{\subseteq}$
dada por $a\tau_{\subseteq}b$ si y solo si $(a)\subseteq(b)\subsetneq D$.
Esta relación es claramente reflexiva, transitiva y antisimétrica.
Por lo tanto es un orden parcial, pero no total porque existen ideales
principales no comparables. Por ejemplo, en $\mathbb{Z}$  los ideales
$(p)$ y $(q)$ no son comparables si $p$ y $q$ son primos no asociados.
No es divisiva por la derecha ni por la izquierda. Por ejemplo, en
$\mathbb{Z}[x]$, $x^{6}\tau_{\subseteq}x^{3}$, $x^{2}|x^{3}$ y
$x^{2}|x^{6}$ pero $(x^{2})\not\subseteq(x^{3})$ y $(x^{2})\not\subseteq(x^{6})$,
por lo tanto $(x^{2},x^{3})\notin\tau_{\subseteq}$ y $(x^{2},x^{6})\notin\tau_{\subseteq}$.
Preserva asociados por la izquierda y por la derecha. Dado que si
$a\sim a'$, entonces $(a)=(a')$. Es multiplicativa por la izquierda
pero no por la derecha. Por ejemplo, en $\mathbb{Z}$, $(8)\subseteq(4)$
pero $(8)\not\subseteq(4\cdot4)=(16)$. Es decir, $(8,4)\in\tau_{\subseteq}$,
pero $(8,4\cdot4)\notin\tau_{\subseteq}$.
\end{example}

\section*{Resultados sobre Composiciones}

Como se mencionó en la introducción, se pretende estudiar $\tau$-factorizaciones
cuando $\tau=\tau_{1}\circ\tau_{2}$. Se analiza esta situación estudiando
cuando los factores $\tau_{1}$, $\tau_{2}$ le trasladan propiedades
a la composición $\tau_{1}\circ\tau_{2}$, la razón de esto se puede
observar en el siguiente ejemplo.
\begin{example}
\label{exa:(-p,p)}Suponer que $D$ es UFD y $p\in D$ un elemento
primo. Considerar dos relaciones $\tau_{1}=\left\{ (p,\pm p)\right\} $
y $\tau_{2}=\{(\pm p,p)\}$, entonces $\tau_{1}\circ\tau_{2}=\{(\pm p,\pm p)\}$.
Observe que $\tau_{1}\circ\tau_{2}$ es una relación simétrica, pero
$\tau_{1}$ y $\tau_{2}$ no lo son, $\tau_{1}\circ\tau_{2}$ es divisiva,
pero $\tau_{1}$ y $\tau_{2}$ solo son divisivas por la izquierda
y la derecha, respectivamente. Por otro lado, si $\tau_{1}=\tau_{2}=\{(\pm p,\pm p)\}$,
entonces $\tau_{1}\circ\tau_{2}=\{(\pm p,\pm p)\}$. Esta falta de
unicidad en los factores de la composición hace que este punto de
vista sea menos conveniente.
\end{example}
La composición no se comporta de la manera esperada respecto a los
tipos clásicos de relaciones, la única propiedad que se preserva es
la reflexividad.
\begin{prop}
\label{prop:reflexividad}Si $\tau_{1}$ ($\tau_{2}$) es reflexiva,
entonces $\tau_{2}\subseteq\tau_{1}\circ\tau_{2}$ (resp. $\tau_{1}\subseteq\tau_{1}\circ\tau_{2}$)
. Si ambas son reflexivas, entonces $\tau_{1}\circ\tau_{2}$ y $\tau_{2}\circ\tau_{1}$
también lo son.
\end{prop}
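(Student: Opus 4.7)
The plan is to unpack the definition of composition directly in each of the three cases and exhibit the required intermediate element $c$ by choosing it equal to one of the endpoints. Recall from the definition that $a(\tau_{1}\circ\tau_{2})b$ means there exists $c\in D^{\#}$ with $a\tau_{2}c$ and $c\tau_{1}b$. All three claims will follow from this observation together with reflexivity applied at a well-chosen point.

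For the first inclusion, I would assume $\tau_{1}$ is reflexive and pick an arbitrary pair $(a,b)\in\tau_{2}$. The natural candidate for the middle element is $c=b$: then $a\tau_{2}b$ holds by hypothesis and $b\tau_{1}b$ holds by reflexivity of $\tau_{1}$, which together give $a(\tau_{1}\circ\tau_{2})b$ and therefore $\tau_{2}\subseteq\tau_{1}\circ\tau_{2}$. The symmetric case, assuming $\tau_{2}$ reflexive and starting from $(a,b)\in\tau_{1}$, is handled in exactly the same way by instead taking $c=a$, which gives $a\tau_{2}a$ from reflexivity and $a\tau_{1}b$ from the hypothesis.

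For the final claim, assume both $\tau_{1}$ and $\tau_{2}$ are reflexive and take any $a\in D^{\#}$. To see that $a(\tau_{1}\circ\tau_{2})a$, it suffices to choose $c=a$ as the intermediate element: then $a\tau_{2}a$ and $a\tau_{1}a$ both hold by reflexivity, so $(a,a)\in\tau_{1}\circ\tau_{2}$. The argument for $\tau_{2}\circ\tau_{1}$ is verbatim the same with the roles of $\tau_{1}$ and $\tau_{2}$ swapped.

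I do not anticipate any real obstacle here; the statement is essentially a direct unpacking of definitions, and the only small care is in keeping straight the left/right convention used in the composition definition (so that when $\tau_{1}$ is reflexive, the middle element must be chosen so that the $\tau_{1}$-reflexive loop is placed on the correct side), which is why the two one-sided inclusions use different choices of $c$.
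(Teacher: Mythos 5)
Your proof is correct: the paper states this proposition without proof, and your argument (choosing the intermediate element $c=b$, $c=a$, or $c=a$ respectively, and invoking reflexivity on the appropriate side of the composition) is exactly the direct unpacking of the definition that the result calls for. The only remark worth adding is that the third claim also follows immediately from the first two, since reflexivity of $\tau_{2}$ together with $\tau_{2}\subseteq\tau_{1}\circ\tau_{2}$ already yields $id_{D^{\#}}\subseteq\tau_{1}\circ\tau_{2}$.
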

Para los demás casos, se encontraron contraejemplos que muestran que
las demás propiedades no se cumplen y los conversos tampoco. Para
mostrar un caso, consideremos las relaciones de equivalencia, que
generalmente se consideran relaciones que presentan ``buen comportamiento''.
\begin{example}
\label{exa:equiv_dominios_infinitos}Considere en $\mathbb{Z}^{\#}$
las particiones 
\begin{eqnarray*}
\mathcal{P}_{1} & = & (\text{\ensuremath{\mathbb{Z}}}^{-}\backslash\{-1\})\cup\left\{ 2\right\} \cup(\text{\ensuremath{\mathbb{Z}}}^{+}\backslash\{1,2\})\\
\mathcal{P}_{2} & = & (\text{\ensuremath{\mathbb{Z}}}^{-}\backslash\{-1\})\cup\left\{ 2,3\right\} \cup(\text{\ensuremath{\mathbb{Z}}}^{+}\backslash\{1,2,3\}).
\end{eqnarray*}
Estas particiones generan las siguientes dos relaciones de equivalencia
$\tau_{1}$ y $\tau_{2},$ dadas por:
\begin{eqnarray*}
\tau_{1} & = & \left\{ (n_{1},n_{2}),(2,2),(p_{1},p_{2})\,:\,n_{1},n_{2}\in(\text{\ensuremath{\mathbb{Z}}}^{-}\backslash\{-1\}),p_{1},p_{2}\in(\mathbb{Z}^{+}\backslash\{1,2\})\right\} ,\,\,\,\text{y}\\
\tau_{2} & = & \left\{ (m_{1},m_{2}),(2,2),(3,3),(2,3),(3,2),(q_{1},q_{2})\right\} \,\,\,\text{con}\\
 &  & m_{1},m_{2}\in(\text{\ensuremath{\mathbb{Z}}}^{-}\backslash\{-1\})\,\,\,\text{y}\,\,\,q_{1},q_{2}\in(\mathbb{Z}^{+}\backslash\{1,2,3\})
\end{eqnarray*}
Observe que $(2,5)\in\tau_{1}\circ\tau_{2}$, pero $(5,2)\notin\tau_{1}\circ\tau_{2}$;
porque por las definiciones de $\tau_{1}$ y $\tau_{2}$, no existe
un entero $x$ tal que $5\tau_{2}x$ y $x\tau_{1}2$. Por lo tanto,
$\tau_{1}\circ\tau_{2}$ no es una relación de equivalencia.
\end{example}
Para las propiedades relacionadas a $\tau$-factorizaciones, observamos
que ser divisivas y preservar asociados se preservan bajo la composición,
pero la propiedad multiplicativa no lo hace.
\begin{prop}
\label{prop2.4 DIV} Sean $\tau_{1}$ y $\tau_{2}$ relaciones sobre
$D^{\#}$. \\
(1) Si $\tau_{2}$ es divisiva por la izquierda, entonces $\tau_{1}\circ\tau_{2}$
es divisiva por la izquierda.\\
(2) Si $\tau_{1}$ es divisiva por la derecha, entonces $\tau_{1}\circ\tau_{2}$
es divisiva por la derecha.\\
(3) Si $\tau_{1}$ es divisiva por la derecha y $\tau_{2}$ es divisiva
por la izquierda, entonces $\tau_{1}\circ\tau_{2}$ es divisiva. Por
ende, si $\tau_{1}$ y $\tau_{2}$ son divisivas, entonces $\tau_{1}\circ\tau_{2}$
y $\tau_{2}\circ\tau_{1}$ son divisivas.
\end{prop}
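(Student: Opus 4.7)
La estrategia principal es desenrollar la definici�n de composici�n y usar directamente la hip�tesis de divisibilidad sobre el factor apropiado; cada inciso se reduce a sustituir uno de los extremos de un ``camino'' $a\to c\to b$ por un divisor, sin alterar al elemento intermedio $c$. No es necesario recurrir a la estructura multiplicativa del dominio ni a la simetr�a o transitividad de las relaciones: se trata de una verificaci�n puramente conjuntista.

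Para (1) partir� de $a\,(\tau_1\circ\tau_2)\,b$ y $a'\mid a$. Por la definici�n de composici�n existe $c\in D^{\#}$ con $a\,\tau_2\,c$ y $c\,\tau_1\,b$. Como $\tau_2$ es divisiva por la izquierda y $a'\mid a$, se obtiene $a'\,\tau_2\,c$; conservando el mismo $c$ se concluye que $a'\,(\tau_1\circ\tau_2)\,b$. El inciso (2) es completamente sim�trico: dado $a\,(\tau_1\circ\tau_2)\,b$ y $b'\mid b$, el mismo intermedio $c$ satisface $a\,\tau_2\,c$, y usando que $\tau_1$ es divisiva por la derecha se deduce $c\,\tau_1\,b'$, de donde $a\,(\tau_1\circ\tau_2)\,b'$.

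El inciso (3) se obtiene combinando (1) y (2): aplicando primero (1) con $\tau_2$ divisiva por la izquierda y luego (2) con $\tau_1$ divisiva por la derecha, resulta que $\tau_1\circ\tau_2$ es divisiva por ambos lados y por ende divisiva. La �ltima afirmaci�n, relativa a $\tau_2\circ\tau_1$, sigue del mismo argumento intercambiando los papeles de $\tau_1$ y $\tau_2$, lo cual es l�cito porque ambas son divisivas por los dos lados.

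No anticipo obst�culos de fondo; el �nico punto que conviene vigilar con cuidado es que el testigo $c$ provisto por la hip�tesis de pertenencia a la composici�n sea \emph{el mismo} que se utiliza tras reemplazar $a$ por $a'$ (o $b$ por $b'$). Esto confirma que basta la divisibilidad lateral sobre un �nico factor de la composici�n, sin exigir condici�n alguna sobre el elemento intermedio, y explica por qu� la propiedad divisiva se propaga ``lado a lado'' a trav�s del operador $\circ$.
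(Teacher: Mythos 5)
Tu demostraci�n es correcta y sigue esencialmente el mismo camino que la del art�culo: desenrollar la composici�n, conservar el testigo intermedio $c$ y aplicar la divisibilidad lateral al factor correspondiente en cada inciso, deduciendo (3) como consecuencia inmediata de (1) y (2). No hay diferencias de fondo que comentar.
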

\begin{proof}
(1) Sean $a,b,a'\in D^{\#}$ tales que $a'|a$ y $a\tau_{1}\circ\tau_{2}b$.
Por la definición de composición, existe un $c\in D^{\#}$, tal que
$a\tau_{2}c$ y $c\tau_{1}b$. Como $\tau_{2}$ es divisiva por la
izquierda, $a'\tau_{2}c$ y por lo tanto $a'\tau_{1}\circ\tau_{2}b$.
Es decir, $\tau_{1}\circ\tau_{2}$ es divisiva por la izquierda.\\
(2) Si $a,b,b'\in D^{\#}$ son tales que $b'|b$ y $a\tau_{1}\circ\tau_{2}b$.
Por la definición de composición, existe un $c\in D^{\#}$ tal que
$a\tau_{2}c$ y $c\tau_{1}b$ . Como $\tau_{1}$ es divisiva por la
derecha, se tiene que $c\tau_{1}b'$. Por lo tanto, $a\tau_{1}\circ\tau_{2}b'$
y $\tau_{1}\circ\tau_{2}$ es divisiva por la derecha. \\
(3) Esto es consecuencia inmediata de los incisos (1) y (2).
\end{proof}
Los resultados de esta sección se resumen en las Tablas \ref{fig:composicion-divisivas},
\ref{fig:composicion-divisivas-izq} y \ref{fig:composicion-divisivas-der}.
En las casillas centrales se indica si el hecho de que los factores
$\tau_{1}$ y $\tau_{2}$ tengan las propiedades divisiva, divisiva
por la izquierda o divisiva por la derecha, implique que la composición
$\tau_{1}\circ\tau_{2}$ también lo haga.

\begin{table}[H]
\begin{centering}
\caption{\label{fig:composicion-divisivas}Cuando $\tau_{1}\circ\tau_{2}$
es divisiva.}
\smallskip{}
\par\end{centering}
\centering{}%
\begin{tabular}{|c|l|l|l|l|}
\hline 
\multicolumn{1}{|c}{} & \multirow{2}{*}{$\tau_{1}\circ\tau_{2}$} & \multicolumn{3}{c|}{$\tau_{1}$}\tabularnewline
\cline{3-5} 
\multicolumn{1}{|c}{} &  & Divisiva & Div. por la izq. & Div. por la der.\tabularnewline
\hline 
\multirow{3}{*}{$\tau_{2}$} & Divisiva & Divisiva & No divisiva & Divisiva\tabularnewline
\cline{2-5} 
 & Div. por la izq. & Divisiva & No divisiva & Divisiva\tabularnewline
\cline{2-5} 
 & Div. por la der. & No divisiva & No divisiva & No divisiva\tabularnewline
\hline 
\end{tabular}
\end{table}

\begin{table}[H]
\begin{centering}
\caption{\label{fig:composicion-divisivas-izq}Cuando $\tau_{1}\circ\tau_{2}$
es divisiva por la izquierda.}
\par\end{centering}
\centering{}\smallskip{}
\begin{tabular}{|c|l|l|l|l|}
\hline 
\multicolumn{1}{|c}{} & \multirow{2}{*}{$\tau_{1}\circ\tau_{2}$} & \multicolumn{3}{c|}{$\tau_{1}$}\tabularnewline
\cline{3-5} 
\multicolumn{1}{|c}{} &  & Divisiva & Div. por la izq. & Div. por la der.\tabularnewline
\hline 
\multirow{3}{*}{$\tau_{2}$} & Divisiva & Divisiva & Divisiva & No divisiva\tabularnewline
\cline{2-5} 
 & Div. por la izq. & Divisiva & Divisiva & No divisiva\tabularnewline
\cline{2-5} 
 & Div. por la der. & Divisiva & Divisiva & No divisiva\tabularnewline
\hline 
\end{tabular}
\end{table}

\begin{table}[H]
\begin{centering}
\caption{\label{fig:composicion-divisivas-der}Cuando $\tau_{1}\circ\tau_{2}$
es divisiva por la derecha.}
\smallskip{}
\par\end{centering}
\centering{}%
\begin{tabular}{|c|l|l|l|l|}
\hline 
\multicolumn{1}{|c}{} & \multirow{2}{*}{$\tau_{1}\circ\tau_{2}$} & \multicolumn{3}{c|}{$\tau_{1}$}\tabularnewline
\cline{3-5} 
\multicolumn{1}{|c}{} &  & Divisiva & Div. por la izq. & Div. por la der.\tabularnewline
\hline 
\multirow{3}{*}{$\tau_{2}$} & Divisiva & Divisiva & Divisiva & Divisiva\tabularnewline
\cline{2-5} 
 & Div. por la izq. & No divisiva & No divisiva & No divisiva\tabularnewline
\cline{2-5} 
 & Div. por la der. & Divisiva & Divisiva & Divisiva\tabularnewline
\hline 
\end{tabular}
\end{table}

Debido a Anderson y Frazier (2011), se sabe que las relaciones que
son divisivas también preservan asociados, luego en las tablas se
puede sustituir ``Divisiva'' por ``Preserva asociados''. Para
el caso de las relaciones multiplicativas, el siguiente ejemplo muestra
que la composición no lo es, aún cuando ambos factores tengan la propiedad.
\begin{example}
\label{enu:ejemplo-multiplicativas}En $\mathbb{Z}^{\#}$, considere
las siguientes relaciones multiplicativas:
\begin{eqnarray*}
\tau_{1} & = & \left\{ (3^{n},2^{m}),(2^{n},3^{m}),(7^{n},5^{m}),(5^{n},7^{m}):n,m\in\mathbb{Z}^{+}\right\} \\
\tau_{2} & = & \left\{ (3^{n},3^{m}),(3^{n},7^{m}),(7^{n},3^{m}),(3^{n},3^{m}7^{p}),(3^{n}7^{m},3^{p}):n,m,p\in\mathbb{Z}^{+}\right\} .
\end{eqnarray*}
Sus composiciones están dadas por:
\begin{eqnarray*}
\tau_{1}\circ\tau_{2} & = & \left\{ (3^{n},2^{m}),(7^{n},2^{m}),(3^{n}7^{m},2^{p}),(3^{n},5^{m})\,:\,n,m,p\in\mathbb{Z}^{+}\right\} \\
\tau_{2}\circ\tau_{1} & = & \left\{ (2^{n},3^{m}),(2^{n},7^{m}),(2^{n},3^{m}7^{p}),(5^{n},3^{m})\,:\,n,m,p\in\mathbb{Z}^{+}\right\} 
\end{eqnarray*}
Note que para $n,m,p\in\mathbb{\mathbb{Z}}^{+}$, $(3^{n},2^{m}),\,\,(3^{n},5^{m})\in\tau_{1}\circ\tau_{2}$,
pero $(3^{n},2^{m}5^{p})\notin\tau_{1}\circ\tau_{2}$, además $(2^{n},3^{m}),\,\,(5^{n},3^{m})\in\tau_{2}\circ\tau_{1}$,
pero $(2^{n}5^{m},3^{p})\notin\tau_{2}\circ\tau_{1}$. Por tanto,
aunque ambas relaciones sean multiplicativas, la composición no necesariamente
lo es.
\end{example}
Existen varios resultados anteriores que muestran por qué es deseable
trabajar con relaciones multiplicativas, si el objetivo es estudiar
$\tau_{1}\circ\tau_{2}$-factorizaciones, es conveniente saber alguna
forma en la que esta composición es multiplicativa, una manera de
lograrlo es considerando las siguientes propiedades.

\label{Propiedad1-Mult}\textit{Propiedad~(1).} Si $a\tau_{1}\circ\tau_{2}c$
y $b\tau_{1}\circ\tau_{2}c$, entonces existe $d\in D$ tal que $a\tau_{2}d$,
$b\tau_{2}d$ y $d\tau_{1}c$.

\label{Propiedad2-Mult}\textit{Propiedad~(2).} Si $a\tau_{1}\circ\tau_{2}b$
y $a\tau_{1}\circ\tau_{2}c$, entonces existe $d\in D$ tal que $a\tau_{2}d$,
$d\tau_{1}b$ y $d\tau_{1}c$.

Considerando estas dos propiedades, entonces se obtienen los siguientes
resultados.
\begin{prop}
\label{prop:proposicion_composicion_multiplicativa}Sean $\tau_{1}$
y $\tau_{2}$ relaciones sobre $D^{\#}$ tales que $\tau_{1}\circ\tau_{2}\neq\emptyset$.
Entonces,\\
(1) Si $\tau_{2}$ es multiplicativa por la izquierda y tal que cumple
la Propiedad (1), entonces $\tau_{1}\circ\tau_{2}$ es multiplicativa
por la izquierda.\\
(2) Si $\tau_{1}$ es multiplicativa por la derecha y tal que cumple
la Propiedad (2), entonces $\tau_{1}\circ\tau_{2}$ es multiplicativa
por la derecha.
\end{prop}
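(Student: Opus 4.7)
The proof is essentially a direct unpacking of the definition of composition, using Properties (1) and (2) as the mechanism that produces a \emph{common} intermediate witness, and then applying the multiplicativity hypothesis on the appropriate factor to glue things together. In fact, Properties (1) and (2) appear to have been designed precisely to make this argument go through, so I do not expect a genuine obstacle.

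For part (1), my plan is to take $a, b, c \in D^{\#}$ with $a\,\tau_1\!\circ\!\tau_2\,c$ and $b\,\tau_1\!\circ\!\tau_2\,c$, and to show $ab\,\tau_1\!\circ\!\tau_2\,c$. The naive strategy of unfolding each composition separately would give two \emph{different} intermediate elements $d_a, d_b$ with $a\tau_2 d_a$, $d_a\tau_1 c$, $b\tau_2 d_b$, $d_b\tau_1 c$, and then one would be stuck because left-multiplicativity of $\tau_2$ requires a common right-hand element. This is exactly what Property (1) supplies: it produces a single $d \in D^{\#}$ with $a\tau_2 d$, $b\tau_2 d$ and $d\tau_1 c$. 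Then left-multiplicativity of $\tau_2$ yields $ab\,\tau_2\,d$, and composing with $d\tau_1 c$ gives $ab\,\tau_1\!\circ\!\tau_2\,c$ by definition.

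For part (2), the plan is symmetric. Start with $a\,\tau_1\!\circ\!\tau_2\,b$ and $a\,\tau_1\!\circ\!\tau_2\,c$ and aim at $a\,\tau_1\!\circ\!\tau_2\,bc$. Again, directly unfolding both compositions produces two different middles that do not match the hypothesis of right-multiplicativity of $\tau_1$. Property (2) furnishes a single $d \in D^{\#}$ with $a\tau_2 d$, $d\tau_1 b$ and $d\tau_1 c$. Right-multiplicativity of $\tau_1$ then gives $d\,\tau_1\,bc$, and composing with $a\tau_2 d$ yields $a\,\tau_1\!\circ\!\tau_2\,bc$.

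The only subtle point is to notice that the hypothesis $\tau_1\circ\tau_2 \neq \emptyset$ is not used in the body of either argument; it is there simply to guarantee that the statement is non-vacuous and that the intermediate witnesses $d$ produced by Properties (1) and (2) actually live in $D^{\#}$. Beyond that, the whole proof is about four lines per part, each amounting to: invoke the Property to obtain $d$, apply multiplicativity of $\tau_i$ on that side, recompose.
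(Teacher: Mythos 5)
Your proof is correct, and it is the evidently intended argument: Properties (1) and (2) exist precisely to supply the common intermediate witness $d$ that the separate unfoldings of the two compositions would fail to provide, after which left-multiplicativity of $\tau_{2}$ (resp.\ right-multiplicativity de $\tau_{1}$) and the definition of composition finish each part. The paper states this proposition without proof, so there is nothing to contrast with; your only inessential slip is attributing to the hypothesis $\tau_{1}\circ\tau_{2}\neq\emptyset$ the job of placing $d$ in $D^{\#}$, when in fact $a\tau_{2}d$ already forces $d\in D^{\#}$ because $\tau_{2}$ is a relation on $D^{\#}$.
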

Como se observará más adelante, existen razones para pensar que en
general, no existen condiciones más débiles en los factores, que hagan
que la composición sea multilplicativa. En la siguiente parte, imponemos
condiciones más fuertes a los factores.

\subsection*{Las condiciones $\tau_{1}\subseteq\tau_{2}$ y $\tau_{1}=\tau_{2}$.}

Una razón importante para considerar este tipo de condiciones es que
en el trabajo de Ortiz {[}\ref{enu:-R.-M.Otiz}{]}, se obtuvo resultados
importantes con condiciones del tipo $\tau_{1}\subseteq\tau_{2}$.
Uno de ellos fué generalizar los resultados de la Figura \ref{fig:tau-estruc-1}.
Ortiz demostró que si $D$ es un $\tau_{2}$-UFD ($\tau_{2}$-BFD,
$\tau_{2}$-FFD y $\tau_{2}$-ACCP) y $\tau_{1}\subseteq\tau_{2}$
dos relaciones divisivas, con $\tau_{2}$ multiplicativa, entonces
$D$ es un $\tau_{1}$-UFD (resp. $\tau_{1}$-BFD, $\tau_{1}$-FFD
y $\tau_{1}$-ACCP). Para el caso de la condición $\tau_{1}=\tau_{2}$,
la asociatividad de la composición nos permite denotarla como $\tau_{1}\circ\tau_{1}=\tau_{1}^{2}$
y en general $\tau_{1}^{n}=\underset{n\,veces}{\underbrace{\tau_{1}\circ\dots\circ\tau_{1}}}$,
veamos algunos resultados obtenidos con esta condición.
\begin{prop}
\label{prop:tau2-de-equiv}Sea $\tau$ un relación en $D^{\#}$ tal
que $\tau^{2}\neq\emptyset$.\\
(1) Si $\tau$ es reflexiva, entonces $\tau^{2}$ es reflexiva.\\
(2) Si $\tau$ es simétrica, entonces $id_{Coim(\tau)\cup Im(\tau)}\subseteq\tau^{2}$
y $\tau^{2}$ es simétrica.\\
(3) Si $\tau$ es transitiva, entonces $\tau^{2}\subseteq\tau$ y
$\tau^{2}$ es transitiva.\\
(4) Si $\tau$ es relación de equivalencia, entonces $\tau^{2}$ es
relación de equivalencia.\\
(5) $Si$ $\tau$ es un orden parcial, entonces $\tau^{2}$ es un
orden parcial.
\end{prop}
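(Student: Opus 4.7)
The plan is to handle the five parts in a cascading order: establish the single-property statements (1)--(3) directly, then obtain (4) and (5) as immediate combinations. Part (1) is already a special case of Proposition~\ref{prop:reflexividad} with $\tau_1=\tau_2=\tau$, so no extra work is needed there. For (2) I would take any $a \in Coim(\tau) \cup Im(\tau)$; whichever side $a$ lies on, symmetry of $\tau$ produces a $b \in D^{\#}$ with both $a\tau b$ and $b\tau a$, and chaining these through $b$ yields $a\tau^2 a$. The symmetry of $\tau^2$ itself then follows by unpacking $a\tau^2 b$ as $a\tau c$ and $c\tau b$ for some $c$, reversing each link via symmetry of $\tau$ to obtain $b\tau c$ and $c\tau a$, and recomposing into $b\tau^2 a$.

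For (3), applying transitivity of $\tau$ to the defining chain $a\tau c$, $c\tau b$ immediately yields $a\tau b$, so $\tau^2 \subseteq \tau$. The transitivity of $\tau^2$ itself requires one more step: given $a\tau^2 b$ and $b\tau^2 c$, I would write $a\tau d$ and $d\tau b$ for some $d$, use the inclusion just proved to reduce $b\tau^2 c$ to $b\tau c$, and then apply transitivity of $\tau$ to the pair $d\tau b$, $b\tau c$ to obtain $d\tau c$. Composing $a\tau d$ with $d\tau c$ gives $a\tau^2 c$, as required.

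Parts (4) and (5) then fall out by combining the previous items. An equivalence relation is reflexive, symmetric and transitive, so (1)--(3) show that $\tau^2$ inherits all three properties. For a partial order, (1) and (3) supply reflexivity and transitivity of $\tau^2$, and antisymmetry is immediate: if $a\tau^2 b$ and $b\tau^2 a$, then the inclusion $\tau^2 \subseteq \tau$ reduces these to $a\tau b$ and $b\tau a$, forcing $a=b$ by antisymmetry of $\tau$.

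There is no really hard step here; the only subtleties worth flagging are, first, that the hypothesis $\tau^2 \neq \emptyset$ in (2) is used exactly to guarantee the existence of at least one witness $b$ to chain through, and second, that the union $Coim(\tau) \cup Im(\tau)$ in (2) invites a case split, even though under symmetry both sets coincide. Keeping this case split explicit in the write-up avoids the temptation to tacitly identify the two sets before the symmetry hypothesis has been invoked.
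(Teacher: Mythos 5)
Your proof is correct. The paper states this proposition without giving a proof, so there is nothing to compare against line by line, but your cascading treatment --- verifying (1)--(3) directly from the definition of composition and then obtaining (4) and (5) as combinations, with antisymmetry of $\tau^{2}$ deduced from the inclusion $\tau^{2}\subseteq\tau$ established in (3) --- is the natural argument and is consistent with how the paper handles the analogous Propositions \ref{prop:reflexividad} and \ref{prop:transitividad_de_ta1circtau2}. One small correction to your closing commentary: the hypothesis $\tau^{2}\neq\emptyset$ is not what supplies the witness $b$ in part (2); that witness comes directly from the assumption $a\in Coim(\tau)\cup Im(\tau)$, which by definition yields some $b$ with $a\tau b$ or $b\tau a$, and the inclusion would hold vacuously even for $\tau=\emptyset$. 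The nonemptiness hypothesis is a blanket assumption of the proposition rather than an ingredient of that particular step.
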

Se puede observar que se obtienen mejores resultados que en el caso
general. Pero no se obtiene mejoría respecto a la propiedad multiplicativa.
Además, se encontraron contraejemplos que muestran que aún en este
caso, los conversos de las proposiciones son falsos, es decir, la
composición no le traslada propiedades a sus factores. Relajando un
poco la condición a $\tau_{1}\subseteq\tau_{2}$, obtenemos lo siguiente.
\begin{prop}
\label{prop:transitividad_de_ta1circtau2} Sean $\tau_{1}$ y $\tau_{2}$
relaciones sobre $D^{\#}$ tales que $\tau_{1}\subseteq\tau_{2}$
y $\tau_{2}$ es transitiva. Entonces $\tau_{1}\circ\tau_{2}\subseteq\tau_{2}$
y $\tau_{2}\circ\tau_{1}\subseteq\tau_{2}$. Si además $id_{Im(\tau_{1}\circ\tau_{2})}\subseteq\tau_{1}$
($id_{Im(\tau_{2}\circ\tau_{1})}\subseteq\tau_{1}$), entonces $\tau_{1}\circ\tau_{2}$
es transitiva (resp. $\tau_{2}\circ\tau_{1}$ es transitiva). 
\end{prop}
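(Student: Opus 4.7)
The plan is to treat the statement in two stages: first establish the two containment inclusions, and then leverage them to derive transitivity under the extra hypothesis.

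For the containment $\tau_{1}\circ\tau_{2}\subseteq\tau_{2}$, I would take $(a,b)\in\tau_{1}\circ\tau_{2}$, unwind the definition of composition to produce an intermediate $c\in D^{\#}$ with $a\tau_{2}c$ and $c\tau_{1}b$, use $\tau_{1}\subseteq\tau_{2}$ to upgrade the second step to $c\tau_{2}b$, and close with the transitivity of $\tau_{2}$ to conclude $a\tau_{2}b$. The containment $\tau_{2}\circ\tau_{1}\subseteq\tau_{2}$ is the mirror image of the same three-line argument, with the intermediate element appearing on the $\tau_{1}$-side first and then being promoted via $\tau_{1}\subseteq\tau_{2}$ before transitivity is applied.

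For the transitivity of $\tau_{1}\circ\tau_{2}$, I would suppose $a(\tau_{1}\circ\tau_{2})b$ and $b(\tau_{1}\circ\tau_{2})c$. The containment just proved, combined with the transitivity of $\tau_{2}$, immediately yields $a\tau_{2}c$. To upgrade this to $(a,c)\in\tau_{1}\circ\tau_{2}$ I need to exhibit a witness in the composition, and the clean choice is $c$ itself: the first half $a\tau_{2}c$ is already in hand, and the hypothesis $id_{Im(\tau_{1}\circ\tau_{2})}\subseteq\tau_{1}$ furnishes the needed $c\tau_{1}c$, because $c$ lies in $Im(\tau_{1}\circ\tau_{2})$ by virtue of $b(\tau_{1}\circ\tau_{2})c$. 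The parenthetical case for $\tau_{2}\circ\tau_{1}$ follows the same blueprint, with the analogous diagonal hypothesis applied at the corresponding endpoint of the composed relation.

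The most delicate point is selecting the intermediate element so that its diagonal membership is exactly what the assumed identity inclusion delivers; once that choice is made, the remainder reduces to a routine chaining of $\tau_{1}\subseteq\tau_{2}$ with the transitivity of $\tau_{2}$. I do not anticipate any serious technical obstacle beyond keeping straight which endpoint of the chain belongs to the image of the composed relation.
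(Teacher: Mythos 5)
Your proof of the two containments and of the transitivity of $\tau_{1}\circ\tau_{2}$ is correct and is essentially the paper's argument; the only cosmetic difference is that the paper re-derives $a\tau_{2}c$ by unwinding both links again instead of citing the containment already proved, while you reuse it, which is cleaner. The choice of $c$ itself as the witness, with $c\tau_{1}c$ supplied by $id_{Im(\tau_{1}\circ\tau_{2})}\subseteq\tau_{1}$ because $b\,(\tau_{1}\circ\tau_{2})\,c$ puts $c$ in the image, is exactly the paper's step.

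The one place you are too quick is the parenthetical case. ``Same blueprint'' does not literally transfer: to witness $a\,(\tau_{2}\circ\tau_{1})\,c$ you need some $d$ with $a\tau_{1}d$ and $d\tau_{2}c$, so the $\tau_{1}$-leg now sits at the \emph{left} end of the chain. Having $a\tau_{2}c$ together with $c\tau_{1}c$ (which is what $id_{Im(\tau_{2}\circ\tau_{1})}\subseteq\tau_{1}$ gives you) only produces $a\,(\tau_{1}\circ\tau_{2})\,c$, the wrong composition; the endpoint whose diagonal membership would make the $\tau_{1}$-leg trivial is $a$, which lies in $Coim(\tau_{2}\circ\tau_{1})$, not in the image, so the stated hypothesis does not directly apply there. (The paper's own ``resp.''\ clause commits the same slip.) The case is nevertheless salvageable, in fact without any identity hypothesis: from $a\tau_{1}c_{1}$, $c_{1}\tau_{2}b$, $b\tau_{1}c_{2}$, $c_{2}\tau_{2}c$, the inclusion $\tau_{1}\subseteq\tau_{2}$ and transitivity of $\tau_{2}$ give $c_{1}\tau_{2}b$ and $b\tau_{2}c$, hence $c_{1}\tau_{2}c$, and then $c_{1}$ itself witnesses $a\,(\tau_{2}\circ\tau_{1})\,c$. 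You should either make that substitute argument explicit or note that the hypothesis you actually need for your witness choice is $id_{Coim(\tau_{2}\circ\tau_{1})}\subseteq\tau_{1}$.
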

\begin{proof}
Si $a\tau_{1}\circ\tau_{2}b$, por la definición de composición, existe
un $c\in D^{\#}$ tal que $a\tau_{2}c$ y $c\tau_{1}b$. Como $\tau_{1}\subseteq\tau_{2}$,
entonces $c\tau_{2}b$. Como $\tau$ es transitiva, $a\tau_{2}b$,
por lo tanto $\tau_{1}\circ\tau_{2}\subseteq\tau_{2}.$ Si $a\tau_{2}\circ\tau_{1}b$,
por la definición de composición existe un $c\in D^{\#}$ tal que
$a\tau_{1}c$ y $c\tau_{2}b$. Como $\tau_{1}\subseteq\tau_{2}$,
entonces $a\tau_{2}c$. Como $\tau$ es transitiva, $a\tau_{2}b$,
por lo tanto $\tau_{2}\circ\tau_{1}\subseteq\tau_{2}$. \\
Para la segunda parte, si $a\tau_{1}\circ\tau_{2}b$ y $b\tau_{1}\circ\tau_{2}c$
($a\tau_{2}\circ\tau_{1}b$ y $b\tau_{2}\circ\tau_{1}c$), por la
definición de composición existen $c_{1},\,c_{2}\in D$ tales que
$a\tau_{2}c_{1}$, $c_{1}\tau_{1}b$, $b\tau_{2}c_{2}$ y $c_{2}\tau_{1}c$
(resp. $a\tau_{1}c_{1}$, $c_{1}\tau_{2}b$, $b\tau_{1}c_{2}$ y $c_{2}\tau_{2}c$).
Por la hipótesis de que $\tau_{1}\subseteq\tau_{2}$, se tiene que
$c_{1}\tau_{2}b$ y $c_{2}\tau_{2}c$ (resp. $a\tau_{2}c_{1}$ y $b\tau_{2}c_{2}$).
Como $\tau_{2}$ es transitiva, $a\tau_{2}b$ y $b\tau_{2}c$, implica
$a\tau_{2}c$. Como $id_{Im(\tau_{1}\circ\tau_{2})}\subseteq\tau_{1}$
(resp. $id_{Im(\tau_{2}\circ\tau_{1})}\subseteq\tau_{1}$) y $c\in Im(\tau_{1}\circ\tau_{2})$
(resp. $c\in Im(\tau_{2}\circ\tau_{1})$) , $c\tau_{1}c$. Por lo
tanto $a\tau_{1}\circ\tau_{2}c$ (resp. $a\tau_{2}\circ\tau_{1}c$)
y así $\tau_{1}\circ\tau_{2}$ (resp. $\tau_{2}\circ\tau_{1}$) es
transitiva.
\end{proof}
Las propiedades de ser relación de equivalencia y orden parcial también
observan un mejor comportamiento.
\begin{prop}
Sean $\tau_{1}$ y $\tau_{2}$ relaciones sobre $D^{\#}$, tales que
$\tau_{1}$ es reflexiva, $\tau_{1}\circ\tau_{2}\neq\emptyset$, $\tau_{2}\circ\tau_{1}\neq\emptyset$
y $\tau_{1}\subseteq\tau_{2}$.\\
(1) Si $\tau_{2}$ es relación de equivalencia, entonces $\tau_{1}\circ\tau_{2}$
y $\tau_{2}\circ\tau_{1}$ también lo son. \\
(2) Si $\tau_{2}$ es un órden parcial, entonces $\tau_{1}\circ\tau_{2}$
y $\tau_{2}\circ\tau_{1}$ también lo son.
\end{prop}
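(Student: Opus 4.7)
La idea es reducir ambos apartados a la igualdad de conjuntos $\tau_{1}\circ\tau_{2}=\tau_{2}=\tau_{2}\circ\tau_{1}$, pues una vez probada esta coincidencia las propiedades de ser relaci\'on de equivalencia o de ser orden parcial se transfieren autom\'aticamente desde $\tau_{2}$ hacia las composiciones, sin necesidad de verificar por separado reflexividad, simetr\'ia, antisimetr\'ia o transitividad de $\tau_{1}\circ\tau_{2}$ y $\tau_{2}\circ\tau_{1}$.

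Primero aplicar\'e la Proposici\'on \ref{prop:transitividad_de_ta1circtau2}: en ambos apartados $\tau_{2}$ es transitiva, pues la transitividad es parte tanto de la definici\'on de equivalencia como de la de orden parcial, y por hip\'otesis $\tau_{1}\subseteq\tau_{2}$; de all\'i se obtienen inmediatamente las contenciones $\tau_{1}\circ\tau_{2}\subseteq\tau_{2}$ y $\tau_{2}\circ\tau_{1}\subseteq\tau_{2}$. Luego invocar\'e la Proposici\'on \ref{prop:reflexividad}: como $\tau_{1}$ es reflexiva, dicha proposici\'on entrega $\tau_{2}\subseteq\tau_{1}\circ\tau_{2}$; aplicando la misma proposici\'on con los papeles de los factores intercambiados, la reflexividad de $\tau_{1}$ tambi\'en fuerza $\tau_{2}\subseteq\tau_{2}\circ\tau_{1}$. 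Combinando ambas direcciones queda $\tau_{1}\circ\tau_{2}=\tau_{2}=\tau_{2}\circ\tau_{1}$, y (1), (2) siguen de inmediato.

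No anticipo un ``paso dif\'icil'' propiamente dicho: el enunciado es esencialmente un corolario conjunto de las Proposiciones \ref{prop:reflexividad} y \ref{prop:transitividad_de_ta1circtau2}, y el mayor cuidado consiste en notar la simetr\'ia impl\'icita en la Proposici\'on \ref{prop:reflexividad} que permite deducir en un solo golpe las dos contenciones $\tau_{2}\subseteq\tau_{1}\circ\tau_{2}$ y $\tau_{2}\subseteq\tau_{2}\circ\tau_{1}$ a partir \'unicamente de la reflexividad de $\tau_{1}$. Al margen de esto, las hip\'otesis $\tau_{1}\circ\tau_{2}\neq\emptyset$ y $\tau_{2}\circ\tau_{1}\neq\emptyset$ resultan redundantes bajo las dem\'as: como $\tau_{1}\subseteq\tau_{2}$ y $\tau_{1}$ es reflexiva sobre $D^{\#}$, tambi\'en lo es $\tau_{2}$, y entonces $id_{D^{\#}}$ est\'a contenida en ambas composiciones, por lo que la no vacuidad es autom\'atica siempre que $D^{\#}\neq\emptyset$.
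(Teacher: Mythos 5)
Your proof is correct; the paper states this proposition without proof, and your reduction --- Proposition \ref{prop:transitividad_de_ta1circtau2} gives $\tau_{1}\circ\tau_{2}\subseteq\tau_{2}$ and $\tau_{2}\circ\tau_{1}\subseteq\tau_{2}$, while Proposition \ref{prop:reflexividad} applied to each composition (using only the reflexivity of $\tau_{1}$) gives the reverse inclusions, so that $\tau_{1}\circ\tau_{2}=\tau_{2}=\tau_{2}\circ\tau_{1}$ and both claims follow at once --- is exactly the argument the surrounding propositions are set up to deliver. Your side remark that the non-emptiness hypotheses are redundant (reflexivity of $\tau_{1}$ together with $\tau_{1}\subseteq\tau_{2}$ already places $id_{D^{\#}}$ inside both compositions) is also correct.
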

Se observa entonces que se obtienen mejores resultados respecto a
algunas de las propiedades clásicas de relaciones. Lastimósamente
esto no ocurre aún con la propiedad multiplicativa, como se muestra
en el siguiente ejemplo.
\begin{example}
Si $\tau_{1}\subseteq\tau_{2}$ y $\tau_{1}$ es multiplicativa por
la izquierda, no necesariamente la composición también lo es. Si se
considera $\tau_{1}=\left\{ (2^{n},2^{m})\,:\,n,m\in\mathbb{Z}^{+}\right\} $
(una relación multiplicativa) y $\tau_{2}=\tau_{1}\cup\left\{ (3,2^{n})\,:\,n\in\mathbb{Z}^{\text{+}}\right\} $.
Entonces $\tau_{1}\circ\tau_{2}=\tau_{2}$. Note que $\tau_{2}$ no
es multiplicativa por la izquierda porque $(2,2),\,(3,2)\in\tau_{1}\circ\tau_{2}$,
pero $(6,2)\notin\tau_{1}\circ\tau_{2}$. Si $\tau_{1}\subseteq\tau_{2}$
y $\tau_{2}$ es multiplicativa por derecha, no se tiene que $\tau_{1}\circ\tau_{2}$
ni $\tau_{1}$ sean multiplicativas por derecha. Considere $\tau_{1}=\left\{ (2,2)\right\} $
y $\tau_{2}=\left\{ (2,2^{n})\,:\,n\in\mathbb{Z}^{+}\right\} $. Entonces
$\tau_{1}\circ\tau_{2}=\tau_{1}$, la cual no es multiplicativa.
\end{example}
Se concluye que aún imponiendo las condiciones $\tau_{1}\subseteq\tau_{2}$
y $\tau_{1}=\tau_{2}$, que se pueden considerar ``fuertes'', estas
no logran que se preserve la propiedad multiplicativa en la composición,
esto justifica la afirmación previa sobre que no haya alguna condición
más débil que la presentada anteriormente. En la siguiente parte se
muestran algunos ejemplos y propiedades de $\tau_{1}\circ\tau_{2}$-factorizaciones
para situaciones particulares, estos casos han sido estudiados anteriormente
y considerado importantes por los autores referenciados en este trabajo.

\section*{algunos ejemplos concretos}

\subsection*{La relación $\tau_{\left(n\right)}$ donde $n\in\mathbb{N}$. }

Sea $D=\mathbb{Z}$ y $n$ un entero positivo fijo, entonces se define
la relación $\tau_{(n)}$ sobre $\mathbb{Z}^{\#}$ como $a\tau_{(n)}b$
si y solo si $a-b\in(n)$. Observe que $a-b\in(n)$ si y solo si $a-b=nk$
para algún $k\in\mathbb{Z}$. Pero esto es equivalente a decir que
$a\equiv b\text{ }(\text{mod }n)$. Es decir, $\tau_{(n)}=\,\,\left(\equiv_{n}\cap\tau_{\mathbb{Z}^{\#}}\right)$,
donde $\equiv_{n}$ es la relación de congruencia módulo $n$ sobre
$\mathbb{Z}$. Por Anderson y Frazier (2011) y Hamon (2007), se conoce
que $\tau_{(n)}$ preserva asociados y es multiplicativa solo cuando
$n=2$; pero nunca es divisiva, si $n>1$. Como $\tau_{(n)}=\,\,\left(\equiv_{n}\cap\tau_{\mathbb{Z}^{\#}}\right)$,
la intersección de dos relaciones de equivalencia sobre $\mathbb{Z}^{\#}$,
$\tau_{(n)}$ también es una relación de equivalencia. Observe que
como $\tau_{(n)}$ una relación simétrica y transitiva, las $\tau$-
factorizaciones coinciden con las $T$-factorizaciones.

Observe que usualmente la relación módulo $n$ en $\mathbb{Z}$, está
definida para $n>1$. Pero la relación $\tau_{(n)}$ se puede definir
para $n\in\mathbb{Z}$. Como $(-n)=(n)$, $\tau_{(-n)}=\tau_{(n)}$.
Por lo tanto, solo se considera cuando $n\geq0$. Si $n=0$, entonces
$\tau_{(n)}=\tau_{(0)}=id_{\mathbb{Z}^{\#}}$, pues dos elementos
se relacionan si y solo si son iguales. Si ambos $n=m=0$, entonces
$\text{gcd}(0,0)$ no está definido. Pero $\tau_{(0)}\circ\tau_{(0)}=\tau_{(0)}=id_{\mathbb{Z}^{\#}}$.
Si $n\neq0$ y $m=0$ , entonces $\tau_{(n)}\circ\tau_{(m)}=\tau_{(n)}$,
pues $\tau_{(0)}=id_{\mathbb{Z}^{\#}}$. Por otro lado, note que $\text{gcd(}n,0)=n$
y $\tau_{(n)}\circ\tau_{(0)}=\tau_{(n)}=\tau_{(\text{gcd}(n,0))}$.
Ahora, suponer que $n,\,m\in\mathbb{Z}^{*}$, por la definición de
composición se tiene que $a\tau_{(n)}\circ\tau_{(m)}b$ si y solo
existe $c\in\mathbb{Z}^{\#}$ tal que $a\tau_{(m)}c$ y $c\tau_{(n)}b$,
es decir que $m|c-a$ y $n|b-c$. Si $n=1$, entonces $\tau_{(1)}=\tau_{\mathbb{Z}^{\#}}$,
pues la diferencia de cualquier dos enteros es divisible por $1$.
La siguiente proposición provee la caracterización de esta composición,
cuando $n$ y $m$ son enteros mayores que $1$.
\begin{prop}
\label{prop:tau_n_circ_tau_m}Si $n,m>1$, entonces $\tau_{(n)}\circ\tau_{(m)}=\tau_{(gcd(m,n))}$.
\end{prop}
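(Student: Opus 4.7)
La estrategia es demostrar la doble inclusi�n $\tau_{(n)}\circ\tau_{(m)}\subseteq\tau_{(\gcd(m,n))}$ y $\tau_{(\gcd(m,n))}\subseteq\tau_{(n)}\circ\tau_{(m)}$. Denotemos $d=\gcd(m,n)$.

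Para la inclusi�n $\tau_{(n)}\circ\tau_{(m)}\subseteq\tau_{(d)}$, primero usar�a la definici�n de composici�n: si $a\,\tau_{(n)}\circ\tau_{(m)}\,b$, entonces existe $c\in\mathbb{Z}^{\#}$ con $m\mid c-a$ y $n\mid b-c$. Escribiendo $c-a=mk$ y $b-c=n\ell$ para ciertos $k,\ell\in\mathbb{Z}$ y sumando, se obtiene $b-a=mk+n\ell$. Como $d$ divide a $m$ y a $n$, $d\mid b-a$, lo que da $a\,\tau_{(d)}\,b$. Esta direcci�n es rutinaria.

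Para la inclusi�n rec�proca $\tau_{(d)}\subseteq\tau_{(n)}\circ\tau_{(m)}$, partir�a de $a\,\tau_{(d)}\,b$, es decir $b-a=dt$ para alg�n $t\in\mathbb{Z}$. Por la identidad de Bezout, existen $x,y\in\mathbb{Z}$ tales que $mx+ny=d$; multiplicando por $t$, $b-a=m(xt)+n(yt)$. Definiendo $c_{0}=a+m(xt)=b-n(yt)$, se verifica inmediatamente que $m\mid c_{0}-a$ y $n\mid b-c_{0}$. El candidato natural para el intermediario es $c_{0}$, lo que establece las congruencias deseadas.

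El obst�culo principal y �nico lugar donde hay que tener cuidado es que la relaci�n est� definida sobre $\mathbb{Z}^{\#}=\mathbb{Z}\setminus\{-1,0,1\}$, por lo que se necesita un intermediario en $\mathbb{Z}^{\#}$ y no solo en $\mathbb{Z}$. Para solventarlo, observar� que si $c$ cumple $c\equiv a\pmod{m}$ y $c\equiv b\pmod{n}$, entonces $c+k\cdot\operatorname{lcm}(m,n)$ tambi�n lo cumple para cualquier $k\in\mathbb{Z}$. Como $m,n>1$ implica $\operatorname{lcm}(m,n)\geq 2$, el conjunto de enteros que satisfacen ambas congruencias es infinito, y por tanto se puede elegir $k$ tal que $c=c_{0}+k\cdot\operatorname{lcm}(m,n)\notin\{-1,0,1\}$, es decir, $c\in\mathbb{Z}^{\#}$. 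Con este $c$ se concluye $a\,\tau_{(m)}\,c$ y $c\,\tau_{(n)}\,b$, luego $a\,\tau_{(n)}\circ\tau_{(m)}\,b$. Combinando ambas inclusiones se obtiene la igualdad.
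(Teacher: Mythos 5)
Tu demostraci\'on es correcta y sigue esencialmente el mismo camino que la del art\'iculo: doble inclusi\'on, con la direcci\'on no trivial resuelta mediante la identidad de Bezout para construir el intermediario $c$. De hecho, tu observaci\'on final es un refinamiento que la prueba del art\'iculo omite: all\'i se toma $c=a-mn_{1}=b+nn_{2}$ sin verificar que $c\in\mathbb{Z}^{\#}$, mientras que t\'u ajustas $c$ sumando m\'ultiplos de $\operatorname{lcm}(m,n)$ para garantizar $c\notin\{-1,0,1\}$, lo cual cierra correctamente ese peque\~no hueco.
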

\begin{proof}
$\left(\subseteq\right)$ Si $a\tau_{(n)}\circ\tau_{(m)}b$, por la
definición de composición, existe $c\in\mathbb{Z}^{+}$ tal que $m|c-a$
y $n|b-c$. Si $g=gcd(m,n)$, entonces $g|c-a$ y $g|b-c$. Por lo
tanto, $g|(c-a)+(b-c)=b-a$ y $a\tau_{(g)}b$. 

$\left(\supseteq\right)$ Para la otra contenencia, suponer que $g=\text{gcd}(m,n)$
y $a\tau_{(g)}b$. Entonces $g|a-b$ (ó $g|b-a$). Por ende, $gt=a-b$
para algún entero $t$. Por la Identidad de Bezout, existen enteros
$k_{1},\,k_{2}$ tales que $g=mk_{1}+nk_{2}$. Si $n_{1}=tk_{1}$
y $n_{2}=tk_{2}$, entonces $a-b=gt=tmk_{1}+tnk_{2}=mn_{1}+nn_{2}$.
Considere $c=a-mn_{1}=b+nn_{2}$. Despejando se obtiene que $a-c=mn_{1}$
y $c-b=nn_{2}$. Esto quiere decir que $m|a-c$ y $n|c-b$. Por la
definición, se tiene que $c\tau_{(n)}b$ y $a\tau_{(m)}c$. Por la
definición de composición, $a\tau_{(n)}\circ\tau_{(m)}b$.
\end{proof}
\begin{cor}
Sean $m,\,n\in\mathbb{Z}^{+}$. Si $n|m$, entonces 
\end{cor}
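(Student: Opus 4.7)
The plan is to reduce this corollary immediately to Proposition \ref{prop:tau_n_circ_tau_m}. The hypothesis $n\mid m$ with $m,n\in\mathbb{Z}^{+}$ forces $\gcd(m,n)=n$, so whenever $n>1$ (in which case $m\geq n>1$ also), the proposition applies verbatim and yields
\[
\tau_{(n)}\circ\tau_{(m)}\;=\;\tau_{(\gcd(m,n))}\;=\;\tau_{(n)},
\]
which is the expected conclusion. The same argument would work symmetrically for $\tau_{(m)}\circ\tau_{(n)}$ if the statement includes both orderings.

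The only situation not covered by the proposition is the degenerate case $n=1$, where $\tau_{(1)}=\tau_{\mathbb{Z}^{\#}}$. I would dispatch this by hand: for any pair $a,b\in\mathbb{Z}^{\#}$, take $c=a$ as bridge; then $a-c=0\in(m)$ gives $a\,\tau_{(m)}\,a$, while $a\,\tau_{(1)}\,b$ is automatic. Hence $a\,(\tau_{(1)}\circ\tau_{(m)})\,b$ for every such pair, so $\tau_{(1)}\circ\tau_{(m)}=\mathbb{Z}^{\#}\times\mathbb{Z}^{\#}=\tau_{(1)}$, again matching the conclusion.

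There is essentially no obstacle here: this corollary is an immediate specialization of the previously proved Proposition \ref{prop:tau_n_circ_tau_m}, plus a one-line verification of the $n=1$ case, which the discussion preceding the proposition has already treated separately (together with the $n=0$ and $m=0$ cases). I would therefore expect the published argument to occupy at most two or three lines, essentially a single invocation of the proposition after observing that $\gcd(m,n)=n$.
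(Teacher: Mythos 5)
Your reduction to Proposition \ref{prop:tau_n_circ_tau_m} is exactly the intended route: the paper states this corollary without proof, treating it as an immediate consequence of $\gcd(m,n)=n$ when $n\mid m$, and your handling of the degenerate case $n=1$ (where $m\geq n$ forces nothing extra, and a bridge element $c=a$ or $c=b$ settles it directly) is correct and consistent with the paper's separate discussion of $\tau_{(1)}$ and $\tau_{(0)}$ preceding the proposition. One caveat: the corollary as printed actually has three enumerated conclusions, namely (1) $\tau_{(m)}\subseteq\tau_{(n)}$, (2) $\tau_{(m)}\circ\tau_{(n)}=\tau_{(n)}$, and (3) $\tau_{(lcm(m,n))}\subseteq\tau_{(m)}\circ\tau_{(n)}$; your argument establishes only (2). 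The missing parts are not a real obstacle --- (1) is the elementary observation that $n\mid m$ and $m\mid(a-b)$ imply $n\mid(a-b)$, and (3) reduces to $\tau_{(m)}\subseteq\tau_{(m)}\circ\tau_{(n)}$ since $lcm(m,n)=m$ here, which follows either from (1) together with (2) or directly from the reflexivity of $\tau_{(n)}$ via Proposition \ref{prop:reflexividad} --- but a complete write-up should record them.
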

(1) $\tau_{(m)}\subseteq\tau_{(n)}$,

(2) $\tau_{(m)}\circ\tau_{(n)}=\tau_{(n)}$, y

(3) $\tau_{(lcm(m,n))}\subseteq\tau_{(m)}\circ\tau_{(n)}$.

Note que estos resultados proveen formas de factorizar la relación
$\tau_{(n)}$ como composición de otras dos, de modo que al menos
para esta relación, se puede predecir qué propiedades (si las hay)
le traslada la composición a sus factores.

\subsection*{La relación $|_{\tau}$.\label{sec:La-relacion_tau_divide}}

Ortiz (2008) desarrolló la relación (que llamó operador) $|_{\tau}$,
que fué definida en Anderson y Frazier (2011) como: dada una relación
simétrica $\tau$ en $D^{\#}$, $a|_{\tau}b$ si existe una $\tau$-factorización
$b=\lambda ab_{1}\cdots b_{n}$ para $b$, donde $a$ aparece como
$\tau$-factor. La expresión ``$a|_{\tau}b$'', se lee ``$a$ $\tau$-\textit{divide}
a $b$''.
\begin{prop}
\label{prop:tau-divide}Sean $\tau_{1}$ y $\tau_{2}$ dos relaciones
sobre $D^{\#}$. Suponer que $a,\,b\in D^{\#}$, \\
(1) Si $a|_{\tau_{1}}b$ y $id_{Coim(\tau_{1})}\subseteq\tau_{2}$,
entonces $a|_{\tau_{1}\circ\tau_{2}}b$.\\
(2) Si $\tau_{1}$ es transitiva,

\hspace{1cm}(a) $\tau_{1}^{2}\subseteq\tau_{1}$, 

\hspace{1cm}(b) las $\tau_{1}^{2}$-factorizaciones son $\tau_{1}$-factorizaciones, 

\hspace{1cm}(c) si $a|_{\tau_{1}^{2}}b$, entonces $a|_{\tau_{1}}b$,
y

\hspace{1cm}(d) los $\tau_{1}$-primos son $\tau_{1}^{2}$-primos.

\end{prop}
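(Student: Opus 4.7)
Para el inciso (1), partir\'ia de una $\tau_{1}$-factorizaci\'on $b=\lambda a b_{1}\cdots b_{n}$ en la que $a$ aparece como $\tau_{1}$-factor y probar\'ia que esa misma expresi\'on es una $\tau_{1}\circ\tau_{2}$-factorizaci\'on de $b$. Lo esencial es verificar que cada pareja $x,y$ de factores que est\'a $\tau_{1}$-relacionada tambi\'en lo est\'a por $\tau_{1}\circ\tau_{2}$. Como $x\tau_{1}y$ implica $x\in Coim(\tau_{1})$, la hip\'otesis $id_{Coim(\tau_{1})}\subseteq\tau_{2}$ proporciona $x\tau_{2}x$; eligiendo $c=x$ como elemento intermedio en la definici\'on de composici\'on, se obtiene $x(\tau_{1}\circ\tau_{2})y$. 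La factorizaci\'on original queda as\'i validada bajo la relaci\'on compuesta, y se concluye $a|_{\tau_{1}\circ\tau_{2}}b$.

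Para el inciso (2) tratar\'ia las cuatro afirmaciones en cascada. La parte (a) es una reformulaci\'on directa de la transitividad: si $x\tau_{1}^{2}y$, existe $c$ con $x\tau_{1}c$ y $c\tau_{1}y$, de donde $x\tau_{1}y$. De (a) se sigue (b), pues toda $\tau_{1}^{2}$-factorizaci\'on $b=\lambda a_{1}\cdots a_{n}$ cumple $a_{i}\tau_{1}^{2}a_{j}$ para $i\neq j$, y por (a) tambi\'en $a_{i}\tau_{1}a_{j}$. El inciso (c) es consecuencia inmediata de (b): si $a$ aparece como $\tau_{1}^{2}$-factor en alguna $\tau_{1}^{2}$-factorizaci\'on de $b$, esa misma factorizaci\'on es una $\tau_{1}$-factorizaci\'on con $a$ como factor. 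Para (d), un $\tau_{1}$-primo $p$ satisface la condici\'on de primalidad frente a cada $\tau_{1}$-factorizaci\'on; dada una $\tau_{1}^{2}$-factorizaci\'on a la que $p$ divide, (b) la convierte en $\tau_{1}$-factorizaci\'on y la conclusi\'on se traslada.

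No preveo un obst\'aculo conceptual importante, pues el argumento se reduce a una cadena de aplicaciones de las definiciones. El paso m\'as delicado es el del inciso (1), donde hay que elegir correctamente el testigo $c=x$ y emplear el lado adecuado (coimagen) de $\tau_{1}$ para invocar la hip\'otesis $id_{Coim(\tau_{1})}\subseteq\tau_{2}$; los dem\'as incisos se desprenden sin dificultad una vez establecido (a).
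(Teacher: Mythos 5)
Tu propuesta es correcta y sigue esencialmente el mismo camino que el art\'iculo: en (1) se usa el testigo $c=x$ junto con $id_{Coim(\tau_{1})}\subseteq\tau_{2}$ para convertir cada pareja $\tau_{1}$-relacionada de factores en una pareja $\tau_{1}\circ\tau_{2}$-relacionada, y en (2) la cadena (a)$\Rightarrow$(b)$\Rightarrow$(c),(d) es la esperada (el art\'iculo omite estos detalles, que t\'u completas correctamente). De hecho tu redacci\'on de (1) es algo m\'as cuidadosa que la del art\'iculo, pues verificas la condici\'on para todas las parejas de factores $i\neq j$ y no solo para las consecutivas.
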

\begin{proof}
(1) Si $a|_{\tau_{1}}b$, existe una $\tau_{1}$-factorización $b=\lambda aa_{1}\cdots a_{n}$,
luego $a\tau_{1}a_{1}$ y $a_{i}\tau_{1}a_{i+1}$, para $i\in\{1,...,n-1\}$.
Como $id_{Coim(\tau_{1})}\subseteq\tau_{2}$, $a\tau_{2}a$ y $a_{i}\tau_{2}a_{i}$
para $i\in\{1,...,n\}$. Por la definición de composición, $a\tau_{1}\circ\tau_{2}a_{1}$
y $a_{i}\tau_{1}\circ\tau_{2}a_{i+1}$, para $i\in\left\{ 1,...,n-1\right\} $.
Por lo tanto, $b=\lambda aa_{1}\cdots a_{n}$ también es una $\tau_{1}\circ\tau_{2}$-factorización.\\
(2) Se omiten los detalles.
\end{proof}
Recuerde que $|_{\tau}$ es una relación, luego se puede pensar en
la composición $|_{\tau}\circ|_{\tau}=|_{\tau}^{2}$. Esta nueva relación
no es vacía puesto que $|_{\tau}$ es reflexiva, además se tienen
las siguiente propiedades.
\begin{prop}
Dada una relación $\tau$ sobre $D^{\#}$. \\
(1) Si $\tau$ es divisiva, $|_{\tau}=|_{\tau}^{2}$.\\
(2) Si $\tau$ es transitiva, $|_{\tau^{2}}\subseteq|_{\tau}^{2}$.\\
(3) Si $\tau$ es reflexiva y transitiva, $|_{\tau}\subseteq|_{\tau^{2}}\subseteq|_{\tau}^{2}$.
\end{prop}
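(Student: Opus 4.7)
El plan general es desmontar cada inciso trabajando directamente con las definiciones de $|_{\tau}$ y $|_{\tau}^{2}=|_{\tau}\circ|_{\tau}$, apoy�ndome en un hecho transversal que usar� repetidamente: $|_{\tau}$ es siempre reflexiva, pues la factorizaci�n trivial $a=1\cdot a$ es vacuamente una $\tau$-factorizaci�n y muestra $a|_{\tau}a$ para todo $a\in D^{\#}$.

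Para el inciso (1), la contenencia $|_{\tau}\subseteq|_{\tau}^{2}$ sale inmediatamente: dado $a|_{\tau}b$, tomo $c=a$ y escribo $a|_{\tau}a$ (por la reflexividad de $|_{\tau}$) junto con $a|_{\tau}b$, obteniendo $a|_{\tau}^{2}b$. La contenencia rec�proca $|_{\tau}^{2}\subseteq|_{\tau}$ es el paso sustancial. Partiendo de $a|_{\tau}^{2}b$, existe $c$ con $a|_{\tau}c$ y $c|_{\tau}b$, de modo que hay $\tau$-factorizaciones $c=\lambda_{1}aa_{1}\cdots a_{n}$ y $b=\lambda_{2}cb_{1}\cdots b_{m}$. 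Al sustituir obtengo $b=\lambda_{1}\lambda_{2}\,a\,a_{1}\cdots a_{n}\,b_{1}\cdots b_{m}$, y lo que debo verificar para concluir $a|_{\tau}b$ es que todas las parejas cruzadas $a\tau b_{j}$ y $a_{i}\tau b_{j}$ tambi�n se cumplen. Este es precisamente el punto donde entra la hip�tesis: como $a|c$ y $a_{i}|c$ en $D$ y se tiene $c\tau b_{j}$, la divisividad por la izquierda de $\tau$ me da $a\tau b_{j}$ y $a_{i}\tau b_{j}$. Este paso de concatenar las dos factorizaciones y controlar simult�neamente todas las nuevas parejas ser� el principal obst�culo t�cnico.

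Para el inciso (2), invoco la Proposici�n \ref{prop:tau2-de-equiv}(3): bajo transitividad, $\tau^{2}\subseteq\tau$, lo que implica que toda $\tau^{2}$-factorizaci�n es $\tau$-factorizaci�n. En consecuencia, si $a|_{\tau^{2}}b$ por medio de $b=\lambda ab_{1}\cdots b_{n}$, la misma factorizaci�n testifica que $a|_{\tau}b$; para pasar a $a|_{\tau}^{2}b$ vuelvo a usar $a|_{\tau}a$ y compongo. Para el inciso (3), la segunda contenencia $|_{\tau^{2}}\subseteq|_{\tau}^{2}$ es exactamente el inciso (2). Para la primera, observo que si $\tau$ es reflexiva entonces $\tau\subseteq\tau^{2}$: dado $a\tau b$, tomo el intermedio igual a $b$ y uso $b\tau b$. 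Por lo tanto toda $\tau$-factorizaci�n es $\tau^{2}$-factorizaci�n y se sigue $|_{\tau}\subseteq|_{\tau^{2}}$, cerrando la cadena.
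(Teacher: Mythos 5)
Tu demostraci�n es correcta. El art�culo enuncia esta proposici�n sin demostraci�n, as� que no hay un argumento del autor con el cual contrastar, pero el tuyo es el camino natural: la reflexividad de $|_{\tau}$ (v�a la factorizaci�n trivial $a=1\cdot a$) da las contenencias f�ciles, la concatenaci�n de las dos factorizaciones junto con la divisividad da $|_{\tau}^{2}\subseteq|_{\tau}$, y las contenencias $\tau^{2}\subseteq\tau$ (por transitividad) y $\tau\subseteq\tau^{2}$ (por reflexividad) reducen los incisos (2) y (3) a trasladar factorizaciones de una relaci�n a la otra. El �nico detalle que conviene explicitar en (1) es que, como la Definici�n \ref{def:tau-facts} exige $a_{i}\tau a_{j}$ para \emph{todo} $i\neq j$, adem�s de $a\tau b_{j}$ y $a_{i}\tau b_{j}$ (que obtienes de $c\tau b_{j}$ por divisividad por la izquierda) debes verificar $b_{j}\tau a$ y $b_{j}\tau a_{i}$, lo cual sale de $b_{j}\tau c$ usando la divisividad por la derecha; si $\tau$ es sim�trica, como en la definici�n original de $|_{\tau}$ de Anderson y Frazier, esto es autom�tico.
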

Este listado de composiciones y contenencias pueden servir de ejemplos
o contraejemplos en estudios futuros relacionados con los conceptos
de $\tau$-factorizaciones y composiciones. Además, esta relación
brindó la idea de que se pueden caracterizar propiedades de relaciones
en términos de composiciones.

\section*{Trabajos futuros}

\subsection*{Conclusiones}

Este estudio abre el camino para analizar con detalle las $\tau_{1}\circ\tau_{2}$-estructuras.
Se observó las propiedades que se heredan entre $\tau_{1}$, $\tau_{2}$
y su composición $\tau_{1}\circ\tau_{2}$. Se encontró que la composición
de una relación $R$ con una subrelación $S$ de $R$ presenta mejor
comportamiento en heredar propiedades. Se debe indicar, que el comportamiento
de la herencia entre relaciones $\tau_{1}$, $\tau_{2}$ y su composición
$\tau_{1}\circ\tau_{2}$ (ó $\tau_{2}\circ\tau_{1}$), no es el mejor.

\subsection*{Trabajos futuros}

\subsection*{$\tau_{1}\circ\tau_{2}$-estructuras}

Considere en $\mathbb{Z}$ las relaciones $\tau_{1}=\mathbb{Z}^{\#}\times\mathbb{Z}^{\#}$
y $\tau_{2}=\left\{ (6,6),(4,4),(9,9)\right\} ,$ entonces $\tau_{1}\circ\tau_{2}=\left\{ (4,n),(6,n),(9,n)\,\,:\,\,n\in\mathbb{Z}^{\#}\right\} .$
Se observa que $36=6\cdot6$ y ésta es una $\tau_{2}$-factorización
única, pero $36=4\cdot9=6\cdot6$ son dos $\tau_{1}\circ\tau_{2}$-factorizaciones
diferentes. Lo cual implica que el hecho de que $\mathbb{Z}$ sea
un $\tau_{1}$-UFD y un $\tau_{2}$-UFD (las únicas $\tau_{2}$-factorizaciones
no triviales son $4^{n},6^{n}$ y $9^{n}$), no implican que sea un
$\tau_{1}\circ\tau_{2}$-UFD. Esto motiva a preguntarse qué propiedades
deben tener dos relaciones $\tau_{1}$ y $\tau_{2}$ sobre $D^{\#}$
para que: ``Si $D$ es un $\tau_{1}$-UFD y $\tau_{2}$-UFD, entonces
$D$ es un $\tau_{1}\circ\tau_{2}$-UFD''. De igual manera se podría
obtener el diagrama de la Figura \ref{fig:tau-estruc-1}. Claro está
que si $\tau_{1}\circ\tau_{2}$ es divisiva, simétrica y transitiva
el diagrama se satisface, porque las $\tau_{1}\circ\tau_{2}$-factorizaciones
coinciden con el concepto de Anderson y Frazier. Por ende, si $D$
es un UFD, entonces $D$ es un $\tau_{1}\circ\tau_{2}$-UFD. Pero
la idea es reconocer este comportamiento sin asumir que $\tau_{1}\circ\tau_{2}$
ser simétrica y transitiva. 

\subsection*{Composición con homomorfismos}

Sea $\tau$ una relación sobre $D^{\#}$ y $f:D\rightarrow D$ un
homomorfismo de anillos. Analizar una composición de la forma $\tau\circ f$,
fué lo que inicialmente motivó este trabajo. Al examinar muchos ejemplos
se encontró que era necesario primero analizar el comportamiento de
la composición en general. Se pretende a futuro realizar el estudio
de la relación $\tau\circ f$ y su relación con la teoría de $\tau$-factorizaciones.

\section*{Referencias}
\begin{enumerate}
\begin{singlespace}
\item \label{enu:Anderson} D. F. Anderson, D. D. Anderson y M. Zafrullah.
\textit{``Factorization in integral domains''}. J. Pure. Appl. Algebra,
69:1-19,1990.
\item \label{enu:S.-McAdam-and}S. McAdam and R. G. Swan. \textit{``Unique
comaximal factorization''}. J. Algebra, 276(1): 180-192, 2004.
\item \label{enu:-A.-M.Frazier}A. M. Frazier. \textit{``Generalized factorizations
in integral domains''. }Tesis de Doctorado, Universidad de Iowa,
2006. 
\item \label{enu:Hamon} S. M. Hamon. \textit{``Some topics in $\tau$-factorizations''.
}Tesis de Doctorado, Universidad de Iowa, 2007.
\item \label{enu:-R.-M.Otiz} R. M. Ortiz Albino. \textit{``On generalized
nonatomic factorizations'',}Tesis de Doctorado, Universidad de Iowa,
2008. 
\end{singlespace}
\end{enumerate}

\end{document}